\documentclass[11pt,a4paper,leqno]{amsart}
\usepackage[T1]{fontenc}
\usepackage[utf8]{inputenc}
\usepackage{amsmath,amssymb,amsthm,mathtools}
\usepackage{lmodern}
\usepackage[expansion=false]{microtype}
\usepackage[hmargin=2.6cm,vmargin=2.5cm,footskip=1cm]{geometry}
\usepackage[hidelinks]{hyperref}
\usepackage{xcolor}

\newcommand{\D}{\mathbb D}
\newcommand{\E}{\mathbf E}
\renewcommand{\P}{\mathbf P}
\newcommand{\Var}{\operatorname{Var}}
\newcommand{\dd}{\,\mathrm d}
\newtheorem{theo}{Theorem}[section]
\newtheorem{lem}[theo]{Lemma}
\newtheorem{propo}[theo]{Proposition}
\newtheorem{coro}[theo]{Corollary}
\theoremstyle{definition}
\newtheorem{defin}[theo]{Definition}
\newtheorem{remark}[theo]{Remark}
\newtheorem*{remark*}{Remark}
\numberwithin{equation}{section}
\title[Wiman--Valiron inequalities in the unit disk]{Wiman--Valiron inequalities in the unit disk\\
outside sets of finite logarithmic measure}
\author{V\'ictor J. Maci\'a}
\address{Departamento de An\'alisis Matem\'atico, Universidad de La Laguna, Spain}
\email{victor.macia@ull.edu.es}
\thanks{Departamento de An\'alisis Matem\'atico, Universidad de La Laguna, Spain.\newline
 \textit{Email address}: \href{mailto:victor.macia@ull.edu.es}{\texttt{victor.macia@ull.edu.es}}.}
\date{}
\subjclass[2020]{30B10, 30B30}
\keywords{Wiman--Valiron inequality, maximum term, unit disk, exceptional set, logarithmic measure, Khinchin families}

\begin{document}
\begin{abstract}
We give affirmative answers to both parts of Question~2.6 posed by Grosse-Erdmann in \cite{GE} concerning Wiman--Valiron inequalities in the unit disk.
For every unbounded analytic function in the disk, we establish the proposed iterated-logarithm inequalities outside exceptional sets of finite logarithmic measure.
The corresponding power estimate is a corollary.
Both conclusions follow from a variance bound for Khinchin families and a classical estimate for their largest atom.
The multiplicative constants in the main inequalities can be chosen absolute.
We also obtain a disk analogue of Rosenbloom's composition estimate, with an explicit boundary prefactor.
The key step combines a boundary change of variable with a monotone auxiliary function whose derivative is exactly the variance of a rescaled member of the Khinchin family.
A classical example shows that the leading logarithmic exponent $1/2$ cannot be decreased.
\end{abstract}
\maketitle

\section{Introduction and main result}\label{sec:introduction}

Let $f(z)=\sum_{k\geq0}a_kz^k$ be analytic in the unit disk $\D$.
For $t \in (0,1)$, write
\[
M(f,t)=\max_{|z|=t}|f(z)| \qquad \text{ and } \qquad \mu(f,t)=\max_{k\geq0}|a_k|t^k
\]
for the maximum modulus and the maximum term, respectively.  Cauchy's estimates for the coefficients yield $\mu(f,t)\leq M(f,t)$.
Wiman--Valiron theory seeks inequalities in the opposite direction, up to additional factors and outside suitable exceptional sets of radii.
\smallskip

On the unit disk, the size of the exceptional set plays an essential role.
A measurable set $E\subset[0,1)$ has finite logarithmic measure if
\begin{equation}\label{eq:log_measure}
	\ell(E):=\int_E\frac{\dd t}{1-t}<\infty.
\end{equation}
Finite logarithmic measure implies logarithmic density zero, that is,
\[
\lim_{r\uparrow1}\frac{\ell(E\cap[0,r])}{\log\, (1/(1-r))}=0,
\]
but the converse fails (take, for instance, $E=\bigcup_{n\geq 1}[1-e^{-n^2},1-e^{-(n^2+1)}]$ and make the substitution $(1-t) = e^{-s}$).
Thus finite logarithmic measure is a strictly stronger requirement on the exceptional set.
It is the disk analogue of the condition $\int_{E\cap[1,\infty)}\dd r/r<\infty$ in the classical Wiman--Valiron theory for entire functions, see \cite[Section~1.1]{GE}.
\smallskip

Rosenbloom introduced in \cite{Rosenbloom} an elegant probabilistic approach to Wiman--Valiron inequalities based on Chebyshev's inequality. K\"ov\'ari adapted this approach to the unit disk in \cite{Kovari}.
A formulation in terms of Khinchin families is given in \cite[Section~1.5]{MaciaThesis}.
Building on Rosenbloom's approach and the estimates of Skaskiv and Kuryliak \cite{SK}, Grosse-Erdmann gave a unified treatment with a short proof in \cite{GE}.
Beyond these deterministic estimates, Wiman--Valiron inequalities also appear in the work of Agneessens and Grosse-Erdmann \cite{AGE} on the growth of random analytic functions and its applications to linear dynamics.
\smallskip

K\"ov\'ari's iterated-logarithm estimates hold outside sets of logarithmic density zero, see \cite[Thm~1.3 and eqn~(7)]{GE}.
Grosse-Erdmann strengthened this conclusion by obtaining an exceptional set of finite weighted measure, although this does not imply finite logarithmic measure \cite[Example~2.5]{GE}.
In \cite[Question~2.6]{GE}, Grosse-Erdmann asks whether the same bounds hold outside sets of finite logarithmic measure for every unbounded analytic function in the unit disk.
We answer both parts affirmatively: part~(a), concerning the iterated-logarithm estimate, and part~(b), concerning the power estimate.
We retain the unboundedness hypothesis to match the question. Bounded nonzero functions are treated in Section~\ref{sec:scope}.
\smallskip

To state the result, let $\log_1\, (y)=\log\, (y)$ and, recursively, $\log_{j+1}\, (y)=\log\, (\log_j\, (y))$ for $j\geq1$, whenever these expressions are defined.
Here and throughout, logarithms are natural, and empty products are understood to be $1$.

\begin{theo}\label{thm:disk_wiman_valiron}
	Let $f$ be an unbounded analytic function in $\D$, and put
	\[
	A(t)=\frac{\mu(f,t)}{1-t}.
	\]
Then there is an absolute constant $C>0$ (one may take $C=64\sqrt3$) such that for every integer $n\geq2$ and every $\delta>0$ there exists a measurable set $E\subset[0,1)$ with $\ell(E)<\infty$ such that
	\begin{equation}\label{eq:disk_iterated}
		M(f,t)\leq C \, A(t)\bigl(\log\, (A(t))\bigr)^{1/2} \left(\prod_{j=2}^{n-1}\log_j\, (A(t))\right) \bigl(\log_n\, (A(t))\bigr)^{1+\delta}, \qquad \text{ for any } t\in(0,1)\setminus E.
	\end{equation}
	The exceptional set $E$ may depend on $f,n,\delta$ and contains an initial interval on whose complement all displayed logarithms are defined and positive.
\end{theo}

\begin{coro}\label{cor:power}
	Under the same hypothesis, for every $\varepsilon>0$ there is a measurable set $E_\varepsilon\subset[0,1)$ with $\ell(E_\varepsilon)<\infty$ such that
	\begin{equation}\label{eq:disk_power}
		M(f,t)\leq C \,\frac{\mu(f,t)}{1-t} \left(\log\, \left(\frac{\mu(f,t)}{1-t}\right)\right)^{1/2+\varepsilon}, \qquad \text{ for any { }} t\in(0,1)\setminus E_\varepsilon.
	\end{equation}
	Here $C$ is the absolute constant in Theorem~\ref{thm:disk_wiman_valiron}.
\end{coro}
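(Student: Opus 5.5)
The statement to prove is Corollary~\ref{cor:power}. The plan is to derive it directly from Theorem~\ref{thm:disk_wiman_valiron} with $n=2$. In that case the product $\prod_{j=2}^{n-1}$ is empty, so \eqref{eq:disk_iterated} reads
\[
M(f,t)\le C\,A(t)\bigl(\log A(t)\bigr)^{1/2}\bigl(\log_2 A(t)\bigr)^{1+\delta},\qquad t\in(0,1)\setminus E,
\]
with $\ell(E)<\infty$. Fix $\varepsilon>0$ and take $\delta=1$ (any fixed $\delta$ will do). Then it suffices to show that $(\log_2 A(t))^{2}\le (\log A(t))^{\varepsilon}$ for all $t$ outside a further set of finite logarithmic measure.

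Put $y=\log A(t)$. The inequality $(\log y)^2\le y^{\varepsilon}$ holds for all $y\ge y_0(\varepsilon)$, since $(\log y)^2=o(y^\varepsilon)$. So the key step is to show that $A(t)\to\infty$ as $t\uparrow1$. Once that is known, there is $r_\varepsilon<1$ with $\log A(t)\ge y_0(\varepsilon)$ for $t\ge r_\varepsilon$.

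\begin{itemize}
\item Since $f$ is unbounded, it is not a polynomial. Hence infinitely many coefficients $a_k$ are nonzero.
\item For each such $k$, we have $\mu(f,t)\ge |a_k|t^k$, and the right-hand side tends to $|a_k|$ as $t\uparrow1$.
\item Moreover $1/(1-t)\to\infty$.
\item Therefore $A(t)\ge |a_k|t^k/(1-t)\to\infty$.
\end{itemize}

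Set $E_\varepsilon=E\cup[0,r_\varepsilon)$. The added initial interval has finite logarithmic measure, namely $\log\bigl(1/(1-r_\varepsilon)\bigr)$, so $\ell(E_\varepsilon)<\infty$. For $t\in(0,1)\setminus E_\varepsilon$, all logarithms are defined and positive, and
\[
(\log A)^{1/2}(\log_2 A)^{2}\le(\log A)^{1/2+\varepsilon}.
\]
This yields \eqref{eq:disk_power} with the same absolute constant $C$.

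The argument presents no real obstacle. The only point needing care is that the threshold $y_0$ depends on $\varepsilon$ alone, so the constant $C$ stays absolute. All dependence on $f$ and $\varepsilon$ is absorbed into the exceptional set $E_\varepsilon$.
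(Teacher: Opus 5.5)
Your proposal is correct and matches the paper's proof: it specializes Theorem~\ref{thm:disk_wiman_valiron} to $n=2$, $\delta=1$, uses $(\log_2 A(t))^{2}\le(\log A(t))^{\varepsilon}$ for large $A(t)$ together with $A(t)\to\infty$, and enlarges the initial interval of the exceptional set. You also correctly note that the threshold depends only on $\varepsilon$, so the constant $C$ stays absolute.
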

\smallskip

These results give affirmative answers to both parts of Grosse-Erdmann's Question~2.6 in \cite{GE}.
More precisely, Theorem~\ref{thm:disk_wiman_valiron} establishes exactly the inequality requested in part~(a), for every $n\geq2$ and $\delta>0$, with the exceptional set satisfying \eqref{eq:log_measure}.
Corollary~\ref{cor:power} answers part~(b) through \eqref{eq:disk_power}: taking $n=2$ and $\delta=1$ in Theorem~\ref{thm:disk_wiman_valiron}, the remaining iterated-logarithm factor can be absorbed into any prescribed positive power of $\log\, (A(t))$. Thus part~(b) follows from part~(a), while preserving finite logarithmic measure of the exceptional set.
\smallskip

The unconditional estimates of Sule\u\i manov and of Skaskiv and Kuryliak provide important precedents of Wiman--Valiron inequalities with an exceptional set of finite logarithmic measure.
They involve additional boundary factors: $(1-t)^{-\delta}$ in Sule\u\i manov's power estimate~\cite{Suleimanov}, and $\bigl(\log\, (1/(1-t))\bigr)^{1/2+\delta}$ in the iterated-logarithm estimates following from Skaskiv and Kuryliak~\cite{SK}.
See \cite[Thm~1.4 and ineqs.~(8), (10), and~(11)]{GE}.
Our results remove these additional factors.
\smallskip

Special cases of the power estimate were already known under additional growth assumptions.
In particular, Skaskiv and Kuryliak~\cite[Thm~1, p.~110]{SK2010}, with $h(t)=(1-t)^{-1}$, obtain an estimate implying \eqref{eq:disk_power} when
\[
\liminf_{t\uparrow1} \frac{\log\, (\log\, (\Omega_f(t)))}{\log\, (1/(1-t))}>0, \qquad \text{ where } \qquad \Omega_f(t)=\sum_{k\geq0}|a_k|t^k.
\]
Our estimates hold without this additional growth assumption on the coefficientwise majorant.
For example, $g(z)=\exp\bigl((\log\, (1/(1-z)))^2\bigr)$ has nonnegative coefficients and satisfies $\log\, (\log\, (\Omega_g(t)))/\log\, (1/(1-t))\to0$, so it lies outside the hypothesis of \cite[Thm~1]{SK2010}.
\smallskip

Rosenbloom's method reduces the problem to bounding the variance of the Khinchin family outside a small exceptional set, and this bound is obtained from a growth lemma. Grosse-Erdmann uses a weighted form of that lemma, and the weight $h(t)=(1-t)^{-1}$ is what produces the extra boundary factors in \cite[Thm~2.1]{GE}. We avoid the weight: after the boundary change of variable $t=e^{-e^{-x}}$, the rescaled variance equals the derivative of a monotone auxiliary function. We can then apply the growth lemma to that function without any weight, and the extra factors disappear.
\smallskip

For an unbounded analytic function in the unit disk, having nonnegative Taylor coefficients, let $m_f(t)$ and $\sigma_f^2(t)$ denote the mean and variance of its Khinchin family $(X_t)$, see Section~\ref{subsec:khinchin}.
Rosenbloom's estimate bounds $f(t)/\mu(f,t)$ by a constant times $1+\sigma_f(t)$, see \eqref{eq:maxterm_variance}.
Write
\[
F(s)=\log\, (f(e^s)),\quad \text{ for any } s<0, \quad \text{ and } \quad H(x)=F(-e^{-x}).
\]
The function $F$ satisfies $F'(s)=m_f(e^s)$ and $F''(s)=\sigma_f^2(e^s)$.
For $t=e^{-e^{-x}}$, exponential tilting of the Khinchin family acts by translation of $x$, see \eqref{eq:tilting_translation}.
The first two cumulants of the rescaled variable $(-\log\, (t))X_t$ are then $H'(x)$ and $(H+H')'(x)$.
Thus $U=H+H'=\log\, (f(t))+(-\log\, (t))m_f(t)$ satisfies
\begin{equation}\label{eq:key_identity_intro}
	U'(x)=e^{-2x}F''(-e^{-x}) =(-\log\, (t))^2\sigma_f^2(t)\geq0.
\end{equation}
Hence $U$ is nondecreasing.
This allows us to apply the growth lemma twice, to $H$ and $U$, outside a set of finite Lebesgue measure in $x$.
Since $\dd t/(1-t)\leq\dd x$, the corresponding set of radii has finite logarithmic measure, see \eqref{eq:measure_transfer_density} for further details.
\smallskip

Identity~\eqref{eq:key_identity_intro} lets us drop the weight: we apply the growth lemma to $H$ and $U$ directly, which gives the argument $H+\psi_1(H)$ in \eqref{eq:general_composition}.
The boundary factor appears when we pass from the rescaled variance to the standard deviation, through the change of variables $e^x=1/(-\log\, (t))\leq1/(1-t)$.
Proposition~\ref{prop:boundary_calculus} and Theorem~\ref{thm:general_L} use this observation to obtain the bounds requested in Question~2.6 without the additional factors in the earlier estimates.
\smallskip

Corollary~\ref{cor:two_auxiliary} also gives a disk analogue of Rosenbloom's two-function estimate \cite[Thm~1.1]{GE}.
Its prefactor is $A(t) = \mu(f,t)/(1-t)$, and the composition is evaluated at $\log\, (A(t))$, with no further boundary weight inside $\psi_2$.
Since $A(t)$ is unchanged when $f$ is replaced by its coefficientwise majorant, the same formulation applies to arbitrary complex coefficients.
This provides a common form for the disk inequalities discussed at the end of \cite[Section~1]{GE}.
\smallskip

This note is organized as follows.
Section~\ref{sec:preliminaries} develops the variance estimate.
Section~\ref{sec:general} proves the general bound, and Section~\ref{sec:iterated} deduces the main results and the two-function composition estimate.
Section~\ref{sec:scope} treats the leading exponent and extensions.

\section{Khinchin families and variance estimates}
\label{sec:preliminaries}

A power series with nonnegative coefficients and positive radius of convergence carries a natural family of probability distributions: for each $t \in (0,R)$, the terms $a_kt^k$, normalized by $f(t)$, form a probability distribution on the nonnegative integers.
Following an idea of Khinchin, Rosenbloom~\cite[p.~326]{Rosenbloom} introduced this probabilistic construction into complex analysis and derived the Wiman--Valiron inequality for entire functions from Chebyshev's inequality.
Related ideas appear earlier in Hayman's theory of admissible functions~\cite{Hayman56}, and later in the local central limit theorem of B\'aez-Duarte~\cite{BaezDuarte} for the partition function. Schumitzky extended Rosenbloom's probabilistic approach to entire functions of several complex variables~\cite{Schumitzky}. A systematic treatment in terms of Khinchin families, including the class of strongly Gaussian power series, is given in \cite{CFFM1, CFFM2, CFFM3, MaciaGauss} and \cite{MaciaThesis}.
\smallskip

Along with other methods, these probabilistic tools will be used to bound $M(f,t)$ in terms of $\mu(f,t)$.
We begin with the standard reduction to nonnegative coefficients, which we include for completeness.
\medskip

For $f(z)=\sum_{k\geq0}a_kz^k$ analytic in $\D$, its coefficientwise majorant $f_+(z)=\sum_{k\geq0}|a_k|z^k$ is analytic in $\D$, since the two series have the same radius of convergence.
For $t \in (0,1)$,
\[
    M(f,t)\leq f_+(t)=M(f_+,t),\qquad \mu(f_+,t)=\mu(f,t).
\]
Thus $A(t) = \mu(f,t)/(1-t)$ is unchanged, and an upper bound for $f_+(t)$ in terms of $A(t)$ gives the same bound for $M(f,t)$, with the same exceptional set.
\smallskip

If $f$ is unbounded, so is $f_+$, and its nonnegative coefficients give $f_+(t)\to\infty$, as $t\uparrow1$.
Moreover, $f_+$ then has infinitely many nonzero coefficients, since a polynomial is bounded on $\D$.
This reduction allows us to work with a nondegenerate Khinchin family while treating arbitrary complex coefficients.

\subsection{Khinchin families and maximum terms}\label{subsec:khinchin}

Let $f(z)=\sum_{k\geq0}a_kz^k$ be analytic in $\D$, with nonnegative coefficients and at least two nonzero coefficients.
We associate to $f$ its Khinchin family $(X_t)_{t \in (0,1)}$, consisting of one random variable $X_t$ for each $t \in (0,1)$, with values in the nonnegative integers and mass functions
\begin{equation}\label{eq:khinchin}
	\P(X_t=k)=\frac{a_kt^k}{f(t)},\qquad \text{ for any integer } k\geq0.
\end{equation}
The variables $X_t$ are not coupled, that is, $(X_t)_{t\in(0,1)}$ is a family indexed by $t$, not a stochastic process, and each $X_t$ is considered separately.
\smallskip

We write $m_f(t)=\E(X_t)$ and $\sigma_f^2(t)=\Var(X_t)$ for the mean and variance functions, which are given by
\[
m_f(t)=\frac{tf'(t)}{f(t)},\qquad \text{ and } \qquad \sigma_f^2(t)=t m_f'(t),\qquad \text{for any } t \in (0,1).
\]
The random variables $X_t$ take at least two different values, so the family is nondegenerate and $\sigma_f^2(t)>0$ for every $t \in (0,1)$.
\medskip

The largest mass in \eqref{eq:khinchin} is $\mu(f,t)/f(t)$.
Chebyshev's inequality concentrates mass in an interval whose length is controlled by the standard deviation.
Counting its integer points gives a lower bound for the largest atom, see also \cite[Lemmas~1.5.1 and~1.5.3]{MaciaThesis}.

\begin{lem}\label{lem:lattice}
Let $X$ be an integer-valued random variable with finite variance $\sigma^2$.
Then
\begin{equation}\label{eq:lattice_atom}
    \sup_{k\in\mathbb Z}\P(X=k) \geq\frac{3}{4(4\sigma+1)}.
\end{equation}
\end{lem}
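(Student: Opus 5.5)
Let $\mu=\E X$. I would apply Chebyshev's inequality and then count integer points, splitting into two cases according to the size of $\sigma$.

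\emph{Case $\sigma=0$.} Then $X$ is almost surely constant, and that constant is an integer, so $\sup_k\P(X=k)=1\geq 3/4$.

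\emph{Case $\sigma>0$.} Apply Chebyshev with the threshold $2\sigma$:
\[
\P\bigl(|X-\mu|<2\sigma\bigr)\geq 1-\frac{\sigma^2}{4\sigma^2}=\frac34 .
\]
The open interval $(\mu-2\sigma,\mu+2\sigma)$ has length $4\sigma$. An interval of length $L$ contains at most $\lfloor L\rfloor+1\leq L+1$ integers, so this one contains at most $4\sigma+1$ integers. By pigeonhole, the largest of these atoms satisfies
\[
\max_k \P(X=k)\geq \frac{3/4}{4\sigma+1}=\frac{3}{4(4\sigma+1)} .
\]

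This single argument covers every $\sigma>0$; no separate treatment of small $\sigma$ is needed, because the bound $L+1$ on the integer count already handles short intervals. In fact the $\sigma=0$ case also follows from the same formula, since $3/(4\cdot 1)\leq 1$, so the split is only a convenience.

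I do not expect a genuine obstacle. The only point needing care is the integer count for an open interval of length $4\sigma$, and the bound $4\sigma+1$ is justified whether or not $4\sigma$ is an integer. The constants $3/4$ and $4$ correspond to the Chebyshev threshold $2\sigma$, which is consistent with the stated form of \eqref{eq:lattice_atom}.
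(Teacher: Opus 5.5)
Your proposal is correct and follows the paper's own proof. Both arguments use Chebyshev's inequality at threshold $2\sigma$, bound the number of integers in an interval of length $4\sigma$ by $4\sigma+1$, and apply pigeonhole, with $\sigma=0$ handled separately; your use of the open interval instead of the paper's closed one makes no difference.
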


\begin{proof}
If $\sigma=0$, a combination of Chebyshev's inequality and the continuity of the probability measure shows that $X$ is almost surely constant, and the bound holds. Assume that $\sigma \neq 0$. Then Chebyshev's inequality gives
\[
    \P\bigl(|X-\E X|\leq2\sigma\bigr)\geq\frac34.
\]
Put $I=[\E X-2\sigma,\E X+2\sigma]$.
This interval has length $4\sigma$, so it contains at most $\lfloor4\sigma\rfloor+1$ integers.
Therefore
\[
    \frac34\leq\P(X\in I) =\sum_{k\in I\cap\mathbb Z}\P(X=k) \leq(4\sigma+1)\sup_{k\in\mathbb Z}\P(X=k).
\]
Dividing by $4\sigma+1$ proves \eqref{eq:lattice_atom}.
\end{proof}

Lemma~\ref{lem:lattice}, applied to $X_t$, yields the following estimate:
\begin{equation}\label{eq:maxterm_variance}
  M(f,t) = f(t)\leq\frac43(4\sigma_f(t)+1)\mu(f,t) \leq\frac{16}{3}(1+\sigma_f(t))\mu(f,t).
\end{equation}
Estimate~\eqref{eq:maxterm_variance} follows the probabilistic approach of Rosenbloom~\cite{Rosenbloom}. See also the proof of Lemma~2.2 in~\cite{GE}. The linear dependence on $\sigma_f(t)$ in \eqref{eq:maxterm_variance} is optimal: for $f(z)=1/(1-z)$, the family \eqref{eq:khinchin} is geometric, with $\mu(f,t)=1$, $f(t)=(1-t)^{-1}$, and $\sigma_f(t)=\sqrt t/(1-t)$, so that $f(t)\sim\sigma_f(t)\,\mu(f,t)$, as the parameter $t\uparrow1$.
\smallskip

To control the variance, we use the fulcrum $F(s)=\log\, (f(e^s))$, which is defined for any $s<0$. In terms of $F$, the mean and variance functions satisfy
\begin{align}\label{eq:fulcrum_mean} 
    F'(s) = m_f(e^s), \quad \text{ and } \quad  F''(s) = \sigma_f^2(e^s).
\end{align}
Thus $F$ is strictly increasing and strictly convex.
If, in addition, $f$ is unbounded in $\D$, then $F(s) = \log(f(e^s))$ escapes to infinity, as $s\uparrow0$.

\subsection{An exceptional-set lemma}

The following growth lemma is a form of \cite[Lemma~2.3]{GE}.
We include its short proof for completeness.

\begin{lem}\label{lem:exceptional}
Let $v:[x_0,\infty)\to[y_0,\infty)$ be continuously differentiable and nondecreasing.
Let $\psi$ be continuous and positive on $[y_0,\infty)$, with
\begin{align}\label{eq: finite_integral}
    \int_{y_0}^{\infty}\frac{\dd y}{\psi(y)}<\infty.
\end{align}
Then the Borel set
\[
    B_v=\{x\geq x_0:v'(x)>\psi(v(x))\}
\]
has finite Lebesgue measure, and in fact
\begin{equation}\label{eq:exceptional_lemma}
    |B_v|\leq\int_{v(x_0)}^{\infty}\frac{\dd y}{\psi(y)}.
\end{equation}
\end{lem}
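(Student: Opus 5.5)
The plan is to bound $|B_v|$ by comparing it with an integral of $v'/\psi(v)$ and then changing variables. Define $\Psi(y)=\int_{y_0}^{y}\dd u/\psi(u)$ for $y\ge y_0$. Since $\psi$ is continuous and positive, $\Psi$ is $C^1$ and nondecreasing, with $\Psi'=1/\psi$. It is also bounded, with $\Psi(\infty)=\int_{y_0}^\infty\dd u/\psi(u)<\infty$ by \eqref{eq: finite_integral}. The composite $\Phi=\Psi\circ v$ is therefore $C^1$ on $[x_0,\infty)$ and nondecreasing, with $\Phi'(x)=v'(x)/\psi(v(x))\ge0$.

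First, $B_v$ is open in $[x_0,\infty)$, being the set where the continuous function $v'-\psi\circ v$ is positive. Hence $B_v$ is Borel. On $B_v$ we have $\Phi'(x)>1$, and everywhere $\Phi'\ge0$. So for every $X>x_0$,
\[
|B_v\cap[x_0,X]|\le\int_{x_0}^{X}\mathbf 1_{B_v}(x)\,\Phi'(x)\dd x\le\int_{x_0}^{X}\Phi'(x)\dd x=\Phi(X)-\Phi(x_0).
\]
The last equality is the fundamental theorem of calculus for the $C^1$ function $\Phi$. This is the only place where regularity is used, and it is harmless here.

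Next, $\Phi(X)-\Phi(x_0)=\int_{v(x_0)}^{v(X)}\dd y/\psi(y)\le\int_{v(x_0)}^{\infty}\dd y/\psi(y)$, because $v$ is nondecreasing and $1/\psi>0$. Letting $X\to\infty$ and using monotone convergence for $|B_v\cap[x_0,X]|$ gives \eqref{eq:exceptional_lemma}. This bound is finite because $v(x_0)\ge y_0$ and \eqref{eq: finite_integral} holds.

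I do not expect a genuine obstacle. The only points needing care are the use of the chain rule for $\Psi\circ v$, which holds because $\Psi$ is $C^1$ (as $\psi$ is continuous) and $v$ is $C^1$, and the measurability of $B_v$. The one step that actually uses the monotonicity of $v$ is dropping the indicator, where $\Phi'\ge0$ is needed.
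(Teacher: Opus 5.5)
Your proof is correct and follows essentially the same route as the paper. Both bound $|B_v\cap[x_0,X]|$ by $\int_{x_0}^{X} v'(x)/\psi(v(x))\,\mathrm{d}x$, substitute $y=v(x)$ (which you make explicit via the primitive $\Psi$ and the fundamental theorem of calculus), bound the result by $\int_{v(x_0)}^{\infty}\mathrm{d}y/\psi(y)$, and let $X\to\infty$ using continuity from below of Lebesgue measure.
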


\begin{proof}
The defining strict inequality involves continuous functions, so $B_v$ is a Borel set.
For $b>x_0$, nonnegativity of $v'(x)$ combined with equation \eqref{eq: finite_integral} gives
\[
    |B_v\cap[x_0,b]| \leq\int_{x_0}^b\frac{v'(x)}{\psi(v(x))}\dd x =\int_{v(x_0)}^{v(b)}\frac{\dd y}{\psi(y)} \leq\int_{v(x_0)}^{\infty}\frac{\dd y}{\psi(y)}<\infty.
\]
Here we apply the change of variables $y=v(x)$. We conclude the proof by letting $b \to \infty$, and then using the continuity from below of the Lebesgue measure.
\end{proof}

\subsection{The variance estimate}

To control the variance in \eqref{eq:maxterm_variance} we take $t=e^{-e^{-x}}$, which turns finite Lebesgue measure in $x$ into finite logarithmic measure in $t$. Write $H(x)=F(-e^{-x})$ and $Y_x=e^{-x}X_t$. For real $u$ with $te^u<1$, equation \eqref{eq:khinchin} gives
\[
\E\,(e^{uX_t}) =\sum_{n=0}^{\infty}e^{un}\P(X_t=n) =\sum_{n=0}^{\infty}\frac{a_n(te^u)^n}{f(t)} =\frac{f(te^u)}{f(t)}.
\]
Reweighting the law of $X_t$ by $e^{uX_t}$ and normalizing by this constant is the exponential change of measure, see, for instance, \cite[Section~XV I.7]{FellerII}. It keeps $X_t$ within the Khinchin family and moves the parameter from $t$ to $te^u$, as the tilted probabilities show:
\[
\P(X_t=n)\cdot\frac{e^{un}}{\E\,(e^{uX_t})}=\frac{a_nt^n}{f(t)}\cdot\frac{e^{un}}{\E\,(e^{uX_t})}=\frac{a_n(te^u)^n}{f(te^u)}=\P(X_{te^u}=n).
\]

Since $te^u<1$ means $u<-\log\, (t)=e^{-x}$, the admissible range shrinks near the boundary. We therefore write $u=\lambda e^{-x}$ with $\lambda<1$, so that $uX_t=\lambda Y_x$, and record the first two cumulants of the rescaled variable $Y_x$.

\begin{lem}\label{lem:cumulants}
For every $t\in(0,1)$, put $x=-\log(-\log t)$. Then, for every $\lambda<1$, we have
	\begin{equation}\label{eq:tilting_translation}
		\log\, (\E\,(e^{\lambda Y_x}))
		=H\bigl(x-\log\, (1-\lambda)\bigr)-H(x).
	\end{equation}
	In particular,
	\begin{equation}\label{eq:rescaled_cumulants}
		\E\,(Y_x)=H'(x),\qquad \Var(Y_x)=H''(x)+H'(x)=(H+H')'(x).
	\end{equation}
\end{lem}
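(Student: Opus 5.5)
The plan is to compute the moment generating function of $Y_x$ directly from the tilting identity and then read off the first two cumulants by differentiating in $\lambda$ at $\lambda=0$.

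\emph{Step 1: the moment generating function.} Fix $t\in(0,1)$, so that $e^{-x}=-\log(t)$. For $\lambda<1$ put $u=\lambda e^{-x}$. Then $u<-\log(t)$, hence $te^u<1$, and $uX_t=\lambda Y_x$. The computation preceding the lemma gives
\[
\log\E\,(e^{\lambda Y_x})=\log f(te^u)-\log f(t).
\]
Now write $te^u=e^{s}$ with $s=\log t+u=-e^{-x}(1-\lambda)$. Since $1-\lambda>0$, we have $s=-e^{-(x-\log(1-\lambda))}$. Therefore
\[
\log f(te^u)=F(s)=H\bigl(x-\log(1-\lambda)\bigr),
\]
and $\log f(t)=F(-e^{-x})=H(x)$. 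This is exactly \eqref{eq:tilting_translation}.

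\emph{Step 2: differentiability.} The function $\Lambda(\lambda)=H(x-\log(1-\lambda))-H(x)$ is real analytic on $\lambda<1$, because $F$ is real analytic on $s<0$. In particular the interval $\lambda<1$ contains a neighbourhood of $0$. A finite moment generating function on a neighbourhood of $0$ has derivatives at $0$ given by the cumulants, so $\E\,(Y_x)=\Lambda'(0)$ and $\Var(Y_x)=\Lambda''(0)$. The finiteness of the moments of $X_t$ also follows directly from $te^u<1$.

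\emph{Step 3: the derivatives.} Put $g(\lambda)=-\log(1-\lambda)$. Then $g(0)=0$, $g'(\lambda)=(1-\lambda)^{-1}$ and $g''(\lambda)=(1-\lambda)^{-2}$, so $g'(0)=g''(0)=1$. By the chain rule,
\[
\Lambda'(\lambda)=H'(x+g(\lambda))\,g'(\lambda),\qquad
\Lambda''(\lambda)=H''(x+g(\lambda))\,g'(\lambda)^2+H'(x+g(\lambda))\,g''(\lambda).
\]
At $\lambda=0$ this gives $\E\,(Y_x)=H'(x)$ and $\Var(Y_x)=H''(x)+H'(x)=(H+H')'(x)$, which is \eqref{eq:rescaled_cumulants}.

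As a consistency check, $H'(x)=e^{-x}F'(-e^{-x})=e^{-x}m_f(t)$, which agrees with $\E\,(Y_x)=e^{-x}\E\,(X_t)$.

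No step is a genuine obstacle. The only point needing care is justifying differentiation under the expectation, and that is covered in Step 2 by analyticity of the moment generating function inside the region of convergence.
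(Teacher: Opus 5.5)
Your proposal is correct and follows essentially the same route as the paper: the tilting identity gives $\E\,(e^{\lambda Y_x})=f(te^{\lambda e^{-x}})/f(t)$, the rewriting $te^{\lambda e^{-x}}=e^{-e^{-(x-\log(1-\lambda))}}$ yields the translation formula, and two applications of the chain rule at $\lambda=0$ give the mean and variance. Your real-analyticity remark justifies the differentiation step a little more explicitly; the paper justifies it only by noting that the right-hand side is finite for $\lambda<1$.
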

\begin{proof}
	By the change of measure above, $\E\,(e^{\lambda Y_x})=\E\,(e^{(\lambda e^{-x})X_t})=f(te^{\lambda e^{-x}})/f(t)$.
	Since $te^{\lambda e^{-x}}=e^{-e^{-x}(1-\lambda)}=e^{-e^{-(x-\log\, (1-\lambda))}}$, taking logarithms gives \eqref{eq:tilting_translation}.
	The right-hand side of \eqref{eq:tilting_translation} is finite for $\lambda<1$, so we may differentiate at $\lambda=0$.
	Writing $v(\lambda)=x-\log\, (1-\lambda)$, so that $v'(\lambda)=(1-\lambda)^{-1}$ and $v(0)=x$, the first two derivatives are
	\[
	\frac{\dd}{\dd\lambda}\,H(v(\lambda))
	=\frac{H'(v(\lambda))}{1-\lambda},
	\qquad \text{ and } \qquad
	\frac{\dd^2}{\dd\lambda^2}\,H(v(\lambda))
	=\frac{H''(v(\lambda))}{(1-\lambda)^2}+\frac{H'(v(\lambda))}{(1-\lambda)^2}.
	\]
	At $\lambda=0$ these equal $H'(x)$ and $H''(x)+H'(x)$.
	Since the first two derivatives at $\lambda=0$ of a cumulant generating function are the mean and the variance of $Y_x$, this gives \eqref{eq:rescaled_cumulants}.
\end{proof}

Identity \eqref{eq:rescaled_cumulants} is what makes the argument work. The mean $H'$ of $Y_x$ need not be monotone, but the variance $U'=(H+H')'$ is nonnegative, so $U=H+H'$ is nondecreasing. Bounding the variance thus amounts to bounding $U'$. The growth lemma, applied to $H$ and to $U$, controls both outside a set of finite measure, and the next proposition combines the two bounds into a single estimate for $U'=e^{-2x}F''(-e^{-x})$, valid for a general nondecreasing convex $F$.

\begin{propo}\label{prop:boundary_calculus}
Let $F\in C^2((s_0,0))$, where $s_0<0$, satisfy $F'\geq0$, $F''\geq0$, and $F(s)\to\infty$ as $s\uparrow0$.
Let $\psi_1,\psi_2$ be continuous, positive functions on $[y_0,\infty)$, with $\psi_2$ nondecreasing, such that
\[
    \int_{y_0}^{\infty}\frac{\dd y}{\psi_j(y)}<\infty, \qquad \text{ for any } j=1,2.
\]
Then there exist $x_0$ and a Borel set $B\subset[x_0,\infty)$ of finite Lebesgue measure such that
\begin{equation}\label{eq:general_composition}
    e^{-2x}F''(-e^{-x}) \leq\psi_2\left(F(-e^{-x})+\psi_1(F(-e^{-x}))\right), \qquad x\in[x_0,\infty)\setminus B.
\end{equation}
\end{propo}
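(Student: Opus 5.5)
We set $H(x)=F(-e^{-x})$ and $U=H+H'$, defined for $x>-\log(-s_0)$. The chain rule gives
\[
H'(x)=e^{-x}F'(-e^{-x})\geq0,\qquad H''(x)=-e^{-x}F'(-e^{-x})+e^{-2x}F''(-e^{-x}),
\]
so that
\[
U'(x)=H''(x)+H'(x)=e^{-2x}F''(-e^{-x})\geq0 .
\]
Thus $H$ and $U$ are both nondecreasing, $H$ is $C^2$ and $U$ is $C^1$. Since $F(s)\to\infty$ as $s\uparrow0$, we have $H(x)\to\infty$ as $x\to\infty$. Choose $x_0$ in the domain so large that $H(x_0)\geq y_0$. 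Then $U(x)\geq H(x)\geq y_0$ for $x\geq x_0$ because $H'\geq0$, so both functions map $[x_0,\infty)$ into $[y_0,\infty)$.

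We apply Lemma~\ref{lem:exceptional} twice. First take $v=H$ and $\psi=\psi_1$: the set
\[
B_1=\{x\geq x_0: H'(x)>\psi_1(H(x))\}
\]
has finite Lebesgue measure. Next take $v=U$ and $\psi=\psi_2$: the set
\[
B_2=\{x\geq x_0: U'(x)>\psi_2(U(x))\}
\]
also has finite measure. Both are Borel sets, since they are defined by strict inequalities between continuous functions. Put $B=B_1\cup B_2$, which has finite measure.

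Now fix $x\in[x_0,\infty)\setminus B$. Since $x\notin B_1$,
\[
U(x)=H(x)+H'(x)\leq H(x)+\psi_1(H(x)).
\]
Since $x\notin B_2$ and $\psi_2$ is nondecreasing,
\[
U'(x)\leq\psi_2(U(x))\leq\psi_2\bigl(H(x)+\psi_1(H(x))\bigr).
\]
Here we used that $H+\psi_1(H)\geq y_0$, so $\psi_2$ is defined there. Substituting $U'(x)=e^{-2x}F''(-e^{-x})$ and $H(x)=F(-e^{-x})$ gives \eqref{eq:general_composition}.

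The main point is not a hard estimate. It is recognizing that $U=H+H'$, rather than $H'$ itself, is the right monotone function to feed into Lemma~\ref{lem:exceptional}. Its derivative is exactly the rescaled variance, and $H'$ need not be monotone, so the lemma cannot be applied to $H'$ directly.

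There is one technical check. Lemma~\ref{lem:exceptional} requires $v$ to be continuously differentiable, and $U'$ is continuous because $F\in C^2$. So the lemma applies to $U$ as well as to $H$.
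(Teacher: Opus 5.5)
Your proof is correct and follows the paper's argument essentially step for step. You use the same auxiliary function $U=H+H'$ with $U'(x)=e^{-2x}F''(-e^{-x})\geq0$, apply Lemma~\ref{lem:exceptional} twice (to $H$ with $\psi_1$ and to $U$ with $\psi_2$), and combine the two bounds through the monotonicity of $\psi_2$; your explicit checks that $U$ is $C^1$ and that $H+\psi_1(H)\geq y_0$ lies in the domain of $\psi_2$ are details the paper leaves implicit.
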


\begin{proof}
Choose $x_0$ large enough that $-e^{-x_0}>s_0$ and $H(x_0)\geq y_0$. Recall that $H(x)=F(-e^{-x})$.
Put $U=H+H'$.
Direct differentiation gives
\begin{align}
    H'(x) &= e^{-x}F'(-e^{-x})\geq0,\label{eq:Hprime} \\
    H''(x) &= -e^{-x}F'(-e^{-x})+e^{-2x}F''(-e^{-x}),\notag \\
    U'(x) &= H'(x)+H''(x)=e^{-2x}F''(-e^{-x})\geq0. \label{eq:variance_cancellation}
\end{align}
This is the analytical form of the cumulant identity \eqref{eq:rescaled_cumulants} written in terms of the fulcrum.
Both $H$ and $U$ are continuously differentiable, nondecreasing, and unbounded, since $U\geq H\to\infty$.
Apply Lemma~\ref{lem:exceptional} to $H$ with $\psi_1$ and to $U$ with $\psi_2$.
The union $B$ of the two exceptional sets has finite measure.
For $x\in[x_0,\infty)\setminus B$, we have the inequalities
\[
    H'(x)\leq\psi_1(H(x)),\qquad \text{ and } \qquad U'(x)\leq\psi_2(U(x)).
\]
Using these inequalities we conclude that $U(x)\ = H(x)+H^{\prime}(x) \leq H(x)+\psi_1(H(x))$.
Monotonicity of $\psi_2$ and \eqref{eq:variance_cancellation} now prove \eqref{eq:general_composition}.
\end{proof}

\smallskip

Proposition~\ref{prop:boundary_calculus} gives an exceptional set in the variable $x$. To return to radii, recall that $t=e^{-e^{-x}}$. Since
\begin{equation}\label{eq:measure_transfer_density}
	\frac{\dd t}{1-t} =\frac{t(-\log\, (t))}{1-t}\dd x\leq\dd x \quad \text{ and } \quad \frac1{-\log\, (t)}\leq\frac1{1-t}, \quad \text{ for any } t \in (0,1),
\end{equation}
a finite-measure exceptional set in $x$ gives the desired exceptional set of radii.
The first inequality follows from $t(-\log\, (t))\leq1-t$ and the second from $1-t\leq-\log\, (t)$, both consequences of $\log\, (u)\leq u-1$ for $u>0$. The first turns a set of finite Lebesgue measure in $x$ into one of finite logarithmic measure in $t$. The second yields the factor $e^x\leq(1-t)^{-1}$ in the estimate for the standard deviation. Recall that $t = e^{-e^{-x}}$.

\section{A general estimate and some auxiliary lemmas}\label{sec:general}

In this section we consider a class of auxiliary functions for which the self-composition in the variance bound of Proposition~\ref{prop:boundary_calculus} can be controlled. The definition uses two conditions. The integrability \eqref{eq:L_integral}, together with the monotonicity of $L$, ensures that the exceptional set has finite Lebesgue measure, see Lemma~\ref{lem:exceptional} above, and the composition bound \eqref{eq:L_composition} makes the cost of iterating $L$ only a constant factor, which is used to close the variance estimate in Lemma~\ref{lem:variance_bound_before_main}.

\begin{defin}\label{def:admissible}
A continuous nondecreasing function $L:[y_0,\infty)\to[1,\infty)$, with $y_0>1$, is called admissible if
\begin{equation}\label{eq:L_integral}
    \int_{y_0}^{\infty}\frac{\dd y}{yL(y)}<\infty,
\end{equation}
and there exists $K\geq1$ such that
\begin{equation}\label{eq:L_composition}
    L(2yL(y))\leq K L(y),\qquad \text{ for any } y\geq y_0.
\end{equation}
It is enough that these conditions hold after increasing $y_0$.
In particular, \eqref{eq:L_integral} forces $L(y)\to\infty$, as $y \rightarrow \infty$, so a constant $L$ is excluded from this class.
\end{defin}

Before looking at the general consequences of admissibility, we check that the functions used in Theorem~\ref{thm:disk_wiman_valiron} belong to this class.
\begin{lem}\label{lem:iterated_admissible}
	Fix an integer $n\geq2$ and $\delta>0$.
	There exists $y_0>1$ such that, on $[y_0,\infty)$, the function
\begin{equation}\label{eq:auxiliary_logarithms}
	L(y)=\left(\prod_{j=1}^{n-2}\log_j\, (y)\right) (\log_{n-1}\, (y))^{1+\delta}\,,
\end{equation}
is well defined, continuous, nondecreasing, and admissible in the sense of Definition~\ref{def:admissible}, with $K=2$.
\end{lem}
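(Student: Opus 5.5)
Choose $y_0$ so large that $\log_{n-1}(y_0)\geq e$. Then on $[y_0,\infty)$ every $\log_j(y)$, $1\leq j\leq n-1$, is defined and at least $e$, hence at least $1$. So $L$ is a finite product of continuous, increasing factors that are at least $1$. It follows that $L$ is continuous, nondecreasing and takes values in $[1,\infty)$.

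For the integrability condition \eqref{eq:L_integral}, observe that
\[
\frac{\dd}{\dd y}\,\log_{n-1}(y)=\Bigl(y\prod_{j=1}^{n-2}\log_j(y)\Bigr)^{-1}.
\]
Hence $1/(yL(y))=(\log_{n-1}(y))^{-1-\delta}\,\frac{\dd}{\dd y}\log_{n-1}(y)$. Substituting $w=\log_{n-1}(y)$ gives $\int_{\log_{n-1}(y_0)}^\infty w^{-1-\delta}\dd w<\infty$. For $n=2$ the product is empty and the same computation applies with $\log_1$.

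The main step is the composition bound \eqref{eq:L_composition} with $K=2$. The plan is to compare each factor of $L(2yL(y))$ with the corresponding factor of $L(y)$.

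First, I claim $L(y)\leq y$ for large $y$; in fact $L(y)\leq (\log y)^{n-1+\delta}$. Then $2yL(y)\leq 2y(\log y)^{n-1+\delta}$, and so
\[
\log(2yL(y))\leq \log y+\log 2+(n-1+\delta)\log_2 y=\log y\,(1+\eta(y)),
\]
where $\eta(y)\to0$ as $y\to\infty$.

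For the higher iterates, write $\log_j(2yL(y))=\log_j(y)+\epsilon_j(y)$. Since $\log$ is concave with slope $1/u$, each further logarithm shrinks the additive error, and by induction $\epsilon_j(y)\to0$ for $j\geq2$. Therefore
\[
\frac{\log_j(2yL(y))}{\log_j(y)}\to1 \quad\text{for every } 1\leq j\leq n-1,
\]
and the same holds for the factor raised to the power $1+\delta$.

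Since $L$ has finitely many factors, the ratio $L(2yL(y))/L(y)$ tends to $1$. Enlarging $y_0$ therefore makes this ratio at most $2$, which gives \eqref{eq:L_composition} with $K=2$. Definition~\ref{def:admissible} explicitly allows increasing $y_0$.

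The only delicate part is making the limit bookkeeping rigorous, in particular in the case $n=2$. There $L(y)=(\log y)^{1+\delta}$, and the argument reduces directly to $\log(2y(\log y)^{1+\delta})/\log y\to1$.
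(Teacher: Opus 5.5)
Your proposal is correct and follows essentially the same route as the paper. You use the substitution $u=\log_{n-1}(y)$ for the integrability condition, and for the composition bound you show $\log L(y)=O(\log_2 y)=o(\log y)$ (made explicit via $L(y)\leq(\log y)^{n-1+\delta}$); from this, $\log(2yL(y))\sim\log y$, the higher iterated logarithms differ by $o(1)$, $L(2yL(y))\sim L(y)$, and enlarging $y_0$ gives $K=2$.
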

\begin{proof}
Fix $n\geq2$ and $\delta>0$.
For sufficiently large $y$, define
\[
L(y)=\left(\prod_{j=1}^{n-2}\log_j\, (y)\right) (\log_{n-1}\, (y))^{1+\delta}.
\]
Choose $y_0>1$ so large that every logarithm here is positive and $L(y)\geq1$ for $y\geq y_0$.
The function $L$ is continuous and increasing on this interval. We now prove that $L$ is admissible in the sense of Definition~\ref{def:admissible}.
\smallskip

The substitution $u=\log_{n-1}\, (y)$ gives
\begin{equation}\label{eq:reciprocal_integral}
	\int_{y_0}^{\infty}\frac{\dd y}{yL(y)} =\int_{\log_{n-1}\, (y_0)}^{\infty}\frac{\dd u}{u^{1+\delta}} <\infty.
\end{equation}
For $n=2$ this is the substitution $u=\log\, (y)$.
\smallskip

It remains to check \eqref{eq:L_composition}. For every $n\geq2$, taking logarithms in the definition of $L$ gives
\[
\log\, (L(y))=O(\log\, (\log\, (y)))=o(\log\, (y)), \qquad \text{ as } y \rightarrow \infty.
\]
Consequently
\[
\frac{\log\, (2yL(y))}{\log\, (y)}=1+\frac{\log 2+\log\, (L(y))}{\log\, (y)}\longrightarrow1, \qquad \text{ as } y\rightarrow\infty.
\]
Therefore $\log\, (2yL(y)) = \log\, (y)(1+o(1))$, as $y \rightarrow \infty$. Taking logarithms in the previous asymptotic relation, and iterating the process, we find that 
\[
\log_{j+1}\, (2yL(y)) =\log_{j+1}\, (y)+o(1), \qquad \text{ as } y \rightarrow \infty.
\]
Since $\log_{j+1}\, (y)\to\infty$, as $y \rightarrow \infty$, we obtain the following asymptotic relation: for any $j \geq 1$ we have
\[
{\log_j\, (2yL(y))}\sim {\log_j\, (y)}, \quad\text{ as } y \rightarrow \infty.
\]
Finally, using the finite product in \eqref{eq:auxiliary_logarithms}, we conclude that $L(2yL(y))\sim L(y)$, as $y \rightarrow \infty$. After increasing $y_0$, if necessary, \eqref{eq:L_composition} holds, for example with $K=2$. Thus $L$ is admissible.
\end{proof}

We now record the growth properties of an admissible $L$ written in terms of $W(y)=\sqrt y\,L(y)$.
\begin{lem}\label{lem:L_growth}
If $L$ is admissible and $W(y)=\sqrt y\,L(y)$, then $W$ is nondecreasing and
\begin{equation}\label{eq:W_bounds}
    W(3y)\leq\sqrt3 K^2 W(y),\qquad \text{ and } \qquad \log\, (W(y))=O(\log\, (y))=o(y),\qquad \text{ as } y\to\infty.
\end{equation}
\end{lem}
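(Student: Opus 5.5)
Monotonicity of $W$ is immediate: $\sqrt y$ is increasing and $L$ is nondecreasing and positive on $[y_0,\infty)$, so their product is nondecreasing. The substance is the doubling-type bound $W(3y)\leq\sqrt3K^2W(y)$. Since $W(3y)=\sqrt3\,\sqrt y\,L(3y)$, it suffices to show $L(3y)\leq K^2L(y)$ for $y\geq y_0$. We iterate the composition bound \eqref{eq:L_composition} twice. Put $y_1=2yL(y)$. Because $L\geq1$, we have $y_1\geq2y\geq y_0$. Then \eqref{eq:L_composition} gives $L(y_1)\leq KL(y)$, and applying it again at $y_1$ gives $L(2y_1L(y_1))\leq KL(y_1)\leq K^2L(y)$. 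Since $2y_1L(y_1)\geq 2y_1\geq4y\geq3y$ and $L$ is nondecreasing, we get $L(3y)\leq L(2y_1L(y_1))\leq K^2L(y)$.

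Note that a single application already gives $L(3y)\leq L(2yL(y))\leq KL(y)$ whenever $L(y)\geq3/2$. I would nonetheless use the two-step argument with $K^2$, since it works for all $y\geq y_0$ without needing $L(y)\geq 3/2$ and matches the stated constant.

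For the logarithmic growth, write $\log W(y)=\tfrac12\log y+\log L(y)$, so it suffices to show $\log L(y)=O(\log y)$. Iterating \eqref{eq:L_composition} along the sequence $y_{k+1}=2y_kL(y_k)\geq2y_k$ yields $L(y_k)\leq K^kL(y_0)$ with $y_k\geq2^ky_0$. For $y\in[y_k,y_{k+1}]$ we have $L(y)\leq L(y_{k+1})\leq K^{k+1}L(y_0)$, and $k\leq\log_2(y/y_0)$, so $L(y)\leq K L(y_0)(y/y_0)^{\log_2K}$. This is polynomial growth, hence $\log L(y)=O(\log y)$. The final step $O(\log y)=o(y)$ is trivial.

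The only delicate point is the interpolation between the points $y_k$. It works because the $y_k$ increase to infinity (they at least double) and $L$ is monotone, so every large $y$ lies in some $[y_k,y_{k+1}]$.
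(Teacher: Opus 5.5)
Your proof is correct and follows essentially the paper's argument. The paper first extracts $L(2y)\leq L(2yL(y))\leq KL(y)$, applies it twice to get $L(3y)\leq L(4y)\leq K^2L(y)$, and then iterates the same doubling bound along the dyadic points $2^my_0$ to obtain $\log L(y)=O(\log y)$; iterating along $y\mapsto 2yL(y)$ instead of $y\mapsto 2y$, as you do, is only a cosmetic variation.
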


\begin{proof}
Since $L(y)\geq1$, for $y \geq 1$ large enough, monotonicity and \eqref{eq:L_composition} imply
\[
    L(2y)\leq L(2yL(y))\leq K L(y).
\]
Consequently $L(3y)\leq L(4y)\leq K^2 L(y)$, and hence $W(3y)\leq\sqrt3K^2W(y)$.
To obtain the second estimate, choose the integer $m\geq0$ such that $2^m y_0\leq y<2^{m+1}y_0$.
Then $L(y)\leq K^{m+1}L(y_0)$, so $\log\, (L(y))=O(\log\, (y))$.
Therefore $\log\, (W(y))=\tfrac12\log\, (y)+\log\, (L(y))=O(\log\, (y))=o(y)$.
\end{proof}

Admissibility now gives a bound for the variance of the Khinchin family in terms of the variable $x$.
\begin{lem}\label{lem:variance_bound_before_main} Let $f$ be an unbounded analytic function in $\D$ and let $L$ be admissible. Then there exist $x_0$ and a Borel exceptional set $B\subseteq[x_0,\infty)$ of finite Lebesgue measure such that
\begin{equation}\label{eq:variance_bound_inlemma}
	\sigma_{f_+}(t)\leq\sqrt{2K}\,e^x\,W(H(x)),\qquad \text{ for any }x\in[x_0,\infty)\setminus B.
\end{equation}
Recall that $H(x)=F(-e^{-x})$, $t=e^{-e^{-x}}$, and $W(y)=\sqrt y\,L(y)$. 
\end{lem}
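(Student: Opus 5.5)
We apply Proposition~\ref{prop:boundary_calculus} to the fulcrum $F(s)=\log\,(f_+(e^s))$ of the majorant $f_+$. This is legitimate: $f_+$ is unbounded with infinitely many nonzero coefficients, so $F\in C^2((-\infty,0))$, $F'\geq0$, $F''\geq0$ by \eqref{eq:fulcrum_mean}, and $F(s)\to\infty$ as $s\uparrow0$. The auxiliary functions will be
\[
\psi_1(y)=yL(y),\qquad \psi_2(y)=yL(y)^2 ,
\]
defined on $[y_0,\infty)$ with $y_0$ from the admissibility of $L$. Both are continuous and positive, and $\psi_2$ is nondecreasing because $L\geq1$ is nondecreasing. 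Moreover $\int_{y_0}^\infty \dd y/\psi_2\leq\int_{y_0}^\infty\dd y/\psi_1<\infty$ by \eqref{eq:L_integral}, using $L\geq1$. The proposition then yields $x_0$ and a Borel set $B$ of finite Lebesgue measure such that, writing $y=H(x)$,
\[
e^{-2x}F''(-e^{-x})\leq \psi_2\bigl(y+yL(y)\bigr),\qquad x\in[x_0,\infty)\setminus B.
\]

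Next we simplify the right-hand side using admissibility. Since $L(y)\geq1$, we have $y+yL(y)\leq 2yL(y)$, and $\psi_2$ is nondecreasing, so
\[
\psi_2\bigl(y+yL(y)\bigr)\leq 2yL(y)\,L\bigl(2yL(y)\bigr)^2\leq 2yL(y)\,K^2L(y)^2
\]
by \eqref{eq:L_composition}. This gives $2K^2\,yL(y)^3$, which has one power of $L$ too many compared with the target $2K\,yL(y)^2=2K\,W(y)^2$. So this choice of $\psi_2$ is too crude, and the choice of $\psi_2$ is the main point to get right.

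The fix is to take $\psi_2(y)=yL(y)$ as well, i.e.\ $\psi_1=\psi_2=yL(y)$. Its reciprocal is integrable by \eqref{eq:L_integral}, and it is nondecreasing. Then
\[
\psi_2\bigl(y+yL(y)\bigr)\leq \psi_2\bigl(2yL(y)\bigr)=2yL(y)\,L\bigl(2yL(y)\bigr)\leq 2K\,yL(y)^2=2K\,W(y)^2 .
\]
This matches the constant $\sqrt{2K}$ exactly, which confirms the choice.

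Finally, by \eqref{eq:fulcrum_mean}, $F''(-e^{-x})=\sigma_{f_+}^2(t)$ with $t=e^{-e^{-x}}$. Hence $\sigma_{f_+}^2(t)\leq 2K\,e^{2x}\,W(H(x))^2$, and taking square roots gives \eqref{eq:variance_bound_inlemma}.

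One technical point remains. We must take $x_0$ large enough that $H(x_0)\geq y_0$, which is possible since $H\to\infty$. Then $2yL(y)\geq y\geq y_0$, so both $\psi_2$ and \eqref{eq:L_composition} are evaluated inside their domain. Apart from the choice of $\psi_2$, the argument is bookkeeping.
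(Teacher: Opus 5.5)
Your final argument is correct and is the paper's proof: apply Proposition~\ref{prop:boundary_calculus} to the fulcrum of $f_+$ with $\psi_1=\psi_2=yL(y)$, then use $y+yL(y)\leq 2yL(y)$, monotonicity, and \eqref{eq:L_composition} to obtain $e^{-2x}\sigma_{f_+}^2(t)\leq 2K\,W(H(x))^2$. The discarded first attempt with $\psi_2=yL(y)^2$ is harmless but can simply be omitted.
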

\begin{proof}
By the reduction in Section~\ref{sec:preliminaries}, we may assume that $a_k\geq0$. Therefore $M(f,t)=f(t)$, and the associated Khinchin family is nondegenerate.
To estimate its variance, set
\[
\psi(y)=yL(y),\qquad \text{ and } \qquad W(y)=\sqrt y\,L(y).
\]
Apply Proposition~\ref{prop:boundary_calculus} to the fulcrum $F(s)=\log\, (f(e^s))$ with $\psi_1=\psi_2=\psi$.
Its hypotheses follow from \eqref{eq:fulcrum_mean} and \eqref{eq:L_integral}. The limit $F(s)\to\infty$ holds because $f$ is unbounded.
Let $B$ be the exceptional set given there.
At a fixed $x\in[x_0,\infty)\setminus B$, write $H=H(x)=F(-e^{-x})$ and $t=e^{-e^{-x}}$.
Since $L(H)\geq1$, we have $H \leq HL(H) = \psi(H)$, and therefore
\[
e^{-2x}\sigma_f^2(t) \leq \psi\bigl(H+\psi(H)\bigr) \leq\psi(2\psi(H)) = 2H L(H)L(2H L(H)) \leq2K H L(H)^2,
\]
which holds for any $x \in [x_0,\infty) \setminus B$. The last inequality follows from \eqref{eq:L_composition}. Observe that the left-hand side is $\Var(Y_x)$.
Thus
\begin{equation}\label{eq:variance_bound}
	\sigma_f(t)\leq\sqrt{2K}\,e^xW(H),\qquad \text{ for any }x\in[x_0,\infty)\setminus B.
\end{equation}
This concludes the proof. 
\end{proof}

We now combine this variance bound with Rosenbloom's maximum-term estimate, and transfer the exceptional set from $x$ to the set of radii, to obtain the general result.
\begin{theo}\label{thm:general_L}
Let $f$ be an unbounded analytic function in $\D$ and let $L$ be admissible.
Set $A(t)=\mu(f,t)/(1-t)$. Then, there exist $C=C(K)>0$ and a measurable set $E\subset[0,1)$ with $\ell(E)<\infty$ such that
\begin{equation}\label{eq:general_L_result}
    M(f,t)\leq C A(t)\sqrt{\log\, (A(t))}\,L(\log\, (A(t))), \qquad \text{ for any } t\in(0,1)\setminus E.
\end{equation}
The set $E$ may be chosen to contain an initial interval so that $\log\, (A(t))\geq y_0$ on its complement.
\end{theo}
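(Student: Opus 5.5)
The plan is to chain Rosenbloom's maximum-term estimate \eqref{eq:maxterm_variance} with the variance bound of Lemma~\ref{lem:variance_bound_before_main}, and then replace the argument $H(x)=\log f_+(t)$ by $\log(A(t))$ using the growth properties in Lemma~\ref{lem:L_growth}.

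\textbf{Reduction and first bound.} By the reduction of Section~\ref{sec:preliminaries}, I will work with $f_+$. Both $\mu$ and $A$ are unchanged, and $M(f,t)\leq f_+(t)$, so I may assume $a_k\geq0$ and $f=f_+$. Lemma~\ref{lem:variance_bound_before_main} gives $x_0$ and a set $B$ of finite Lebesgue measure with $\sigma_f(t)\leq\sqrt{2K}\,e^xW(H(x))$ off $B$. By \eqref{eq:measure_transfer_density} we have $e^x=1/(-\log t)\leq 1/(1-t)$. Enlarging $x_0$ so that $W(H(x))\geq1$ and $e^x\geq1$, we get $1+\sigma_f(t)\leq(1+\sqrt{2K})\,e^xW(H)$. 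Then \eqref{eq:maxterm_variance} yields
\[
f(t)\leq C_1\,A(t)\,W(H(x)),\qquad C_1=\tfrac{16}{3}\bigl(1+\sqrt{2K}\bigr),
\]
for $x\geq x_0$ with $x\notin B$.

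\textbf{Replacing $H$ by $\log A$.} This is the main step. Taking logarithms gives $H\leq \log A+\log C_1+\log W(H)$. By \eqref{eq:W_bounds}, $\log W(H)=o(H)$, and $H\to\infty$ as $x\to\infty$. So after enlarging $x_0$ we have $\log C_1+\log W(H)\leq H/2$, hence $H\leq 2\log A$.

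I must also ensure $\log A$ is large. Since $f$ has a nonzero coefficient $a_k$, we get $A(t)\geq a_kt^k/(1-t)\to\infty$ as $t\uparrow1$. So for $t$ near $1$ we have $\log A(t)\geq y_0$, and $2\log A\leq 3\log A$.

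Since $W$ is nondecreasing, Lemma~\ref{lem:L_growth} gives $W(H)\leq W(3\log A)\leq\sqrt3K^2W(\log A)$. Therefore
\[
M(f,t)\leq C_1\sqrt3K^2\,A(t)\sqrt{\log A(t)}\,L(\log A(t)),
\]
and we take $C=C_1\sqrt3K^2$. The only delicate point is the order of choosing thresholds: $x_0$ must be large enough for both the $o(H)$ absorption and for $\log A\geq y_0$. Both are achievable because $H\to\infty$ and $A\to\infty$.

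\textbf{Exceptional set.} Let $t_0=e^{-e^{-x_0}}$ and set $E=[0,t_0]\cup\{e^{-e^{-x}}:x\in B\}$. This set is measurable, since it is the image of a Borel set under a smooth increasing bijection. Its logarithmic measure is at most $\log\frac1{1-t_0}+|B|<\infty$, by the first inequality in \eqref{eq:measure_transfer_density}. The set $E$ contains an initial interval on whose complement $\log A(t)\geq y_0$, which completes the plan.
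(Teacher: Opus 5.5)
Your proposal is correct and follows the paper's proof step for step: you reduce to $f_+$, combine Lemma~\ref{lem:variance_bound_before_main} with \eqref{eq:maxterm_variance} and $e^x\leq(1-t)^{-1}$ to get $f(t)\leq C_1A(t)W(\log f(t))$, absorb $\log W(h)\leq h/2$ to reach $h\leq 3\log A(t)$, and transfer the exceptional set via \eqref{eq:measure_transfer_density}, ending with the same constant $C(K)=\sqrt3K^2C_1$. The only cosmetic difference is that you fold $\log C_1$ into the $h/2$ absorption to get $h\leq2\log A$ directly, whereas the paper writes $h\leq 2a+2\log C_1\leq 3a$.
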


\begin{proof} Fix $W(y) = \sqrt{y}L(y)$ and recall that $H(x) = F(-e^{-x}) = \log(f(t))$, where we use the relation $t = e^{-e^{-x}}$.
Using again the reduction in Section~\ref{sec:preliminaries}, we may assume that $a_k\geq0$. Consequently $M(f,t)=f(t)$, and the associated Khinchin family is nondegenerate. We apply Lemma \ref{lem:variance_bound_before_main} to obtain that there exists a Borel exceptional set $B \subseteq [x_0,\infty)$ of finite Lebesgue measure such that 
\begin{equation}\label{eq:variance_bound_ingeneral}
	\sigma_f(t)\leq\sqrt{2K}\,e^xW(H),\qquad \text{ for any }x\in[x_0,\infty)\setminus B.
\end{equation}
We now transfer the exceptional set to the interval of radii.
Increase $x_0$ if necessary, replace $B$ by $B\cap[x_0,\infty)$, and put $t_0=e^{-e^{-x_0}}$.
Define
\[
    E=[0,t_0]\cup\{e^{-e^{-x}}:x\in B\}.
\]
This is a Borel set because the change of variable is a homeomorphism onto its image.
By using equation \eqref{eq:measure_transfer_density}, we find that 
\begin{equation}\label{eq:measure_transfer}
	\ell(E)=\int_0^{t_0}\frac{\dd t}{1-t}+\int_B\frac{t(-\log\, (t))}{1-t}\,\dd x\leq-\log\, (1-t_0)+|B|<\infty.
\end{equation}
We may choose $x_0$ so large that $e^xW(H)\geq1$ and therefore Lemma \ref{lem:variance_bound_before_main} gives that
\begin{align}\label{eq: variance_and_W}
	1+\sigma_f(t)\leq 1+\sqrt{2K}\,e^xW(H)\leq (1+\sqrt{2K})\,e^xW(H),\qquad \text{ for any }  t\in(0,1)\setminus E.
\end{align}
Combining \eqref{eq:maxterm_variance}, \eqref{eq:variance_bound_ingeneral} and \eqref{eq: variance_and_W} with the inequality $e^x=1/(-\log\, (t))\leq1/(1-t)$, we find that, for any $t\in(0,1)\setminus E$, we have
\begin{equation}\label{eq:before_absorption}
  M(f,t)=f(t) \leq \frac{16}{3}(1+\sigma_f(t))\mu(f,t) \leq C_1 A(t)W(\log\, (f(t))).
\end{equation}
Here $C_1=\frac{16}{3}(1+\sqrt{2K})$ and $A(t) = \mu(f,t)/(1-t)$.

\smallskip
It remains to replace $\log\, (f(t))$ by $\log\, (A(t))$ in this bound.
For some fixed $k$ we have $a_k>0$, and hence $\mu(f,t)\geq a_kt^k$.
Therefore $A(t) = \mu(f,t)/(1-t) \to\infty$, as $t \uparrow 1$.
Write $h=\log\, (f(t))$ and $a=\log\, (A(t))$.
Both tend to infinity, as $t\uparrow1$.
By Lemma~\ref{lem:L_growth}, $\log\, (W(h))\leq h/2$ for all sufficiently large $h$.
Taking logarithms in \eqref{eq:before_absorption}, we obtain, for all sufficiently large $h$, the inequality
\[
    h\leq\log\, (C_1)+a+\log\, (W(h)) \leq\log\, (C_1)+a+\frac h2.
\]
Therefore we conclude that $h\leq2a+2\log\, (C_1)\leq3a$, for any $t\in(0,1)\setminus E$ sufficiently close to $1$.
Monotonicity and \eqref{eq:W_bounds} give $W(h)\leq W(3a)\leq\sqrt3 K^2W(a)$.
Substituting this into \eqref{eq:before_absorption} proves \eqref{eq:general_L_result} with $C(K)=\sqrt3K^2C_1$ for the nonnegative-coefficient function, after adding an initial interval to $E$ if needed.
Adding the initial interval keeps the exceptional set of finite logarithmic measure.
The reduction in Section~\ref{sec:preliminaries} gives the result for arbitrary complex coefficients.
\end{proof}

\section{Proof of the main results and a general composition estimate}\label{sec:iterated}

\begin{proof}[Proof of Theorem~\ref{thm:disk_wiman_valiron}] Fix $n\geq2$ and $\delta>0$.
	For sufficiently large $y$, define
\[
L(y)=\left(\prod_{j=1}^{n-2}\log_j\, (y)\right) (\log_{n-1}\, (y))^{1+\delta}.
\]
Applying Lemma~\ref{lem:iterated_admissible} we conclude that $L$ is admissible with $K=2$.
\smallskip

Now we apply Theorem~\ref{thm:general_L}. Since $K=2$, we may take $C=64\sqrt3$, independently of $f,n,\delta$.
With this choice of $L$, Theorem~\ref{thm:general_L} gives \eqref{eq:disk_iterated} with the factor $W(\log\, (A))=\sqrt{\log\, (A)}\,L(\log\, (A))$, and substituting $L$ yields
\[
\sqrt{\log\, (A)}\,L(\log\, (A)) =(\log\, (A))^{1/2} \left(\prod_{j=2}^{n-1}\log_j\, (A)\right) (\log_n\, (A))^{1+\delta}\,.
\]
This concludes the proof. 
\end{proof}

\begin{proof}[Proof of Corollary~\ref{cor:power}]\label{proof of cor:power}
Use Theorem~\ref{thm:disk_wiman_valiron} with $n=2$ and $\delta=1$.
For every $\varepsilon>0$, and for $A$ large enough, we have the inequality
\[
    (\log\, (\log\, (A)))^2\leq(\log\, (A))^\varepsilon\,.
\]
Since $A(t) = \mu(f,t)/(1-t) \to\infty$, as $t \uparrow 1$, the inequality \eqref{eq:disk_power} follows by enlarging the initial interval in the exceptional set.
\end{proof}

\begin{remark*}[A common exceptional set]
For each fixed $f$ as in Theorem~\ref{thm:disk_wiman_valiron}, the exceptional set can be chosen independently of $n$ and $\delta$, provided that each estimate is required only for radii sufficiently close to $1$, with a threshold that may depend on $n$ and $\delta$.
\smallskip

To see this, for each pair of integers $n\geq2$ and $m\geq1$, let $E_{n,m}$ be an exceptional set given by Theorem~\ref{thm:disk_wiman_valiron} with parameters $n$ and $\delta=1/m$. Since $\ell(E_{n,m})<\infty$ and $A(t)\to\infty$ as $t\uparrow1$, we can choose $r_{n,m}\in(0,1)$ sufficiently close to $1$ so that
\[
	\ell\bigl(E_{n,m}\cap[r_{n,m},1)\bigr)\leq 2^{-n-m} \qquad\text{and}\qquad \log_n\, (A(t))\geq1, \quad\text{for all }t\in[r_{n,m},1).
\]
Define
\[
	E=\bigcup_{n\geq2}\bigcup_{m\geq1}\bigl(E_{n,m}\cap[r_{n,m},1)\bigr).
\]
This set is measurable and has finite logarithmic measure, since
\[
   \ell(E)\leq\sum_{n\geq2}\sum_{m\geq1}2^{-n-m}=\frac12.
\]

Now fix any integer $n\geq2$ and any $\delta>0$, and choose an integer $m\geq1$ such that $1/m\leq\delta$. If $t\in[r_{n,m},1)\setminus E$, then $t\notin E_{n,m}$, so \eqref{eq:disk_iterated} holds with exponent $1+1/m$. Moreover, our choice of $r_{n,m}$ ensures that
\[
	\bigl(\log_n\, (A(t))\bigr)^{1+1/m}\leq\bigl(\log_n\, (A(t))\bigr)^{1+\delta}.
\]

Consequently, there is a single measurable set $E=E(f)$ of finite logarithmic measure such that, for every integer $n\geq2$ and every $\delta>0$, there exists a radius $r=r(f,n,\delta)\in(0,1)$ for which
\[
    M(f,t)\leq C\,A(t)\bigl(\log\, (A(t))\bigr)^{1/2}\left(\prod_{j=2}^{n-1}\log_j\, (A(t))\right)\bigl(\log_n\, (A(t))\bigr)^{1+\delta}, \qquad t\in[r,1)\setminus E.
\]
Here $C$ is the same absolute constant as in Theorem~\ref{thm:disk_wiman_valiron}. \hfill $\boxdot$
\end{remark*}

The normalization $A(t) = \mu(f,t)/(1-t)$ also permits a composition estimate with two independent auxiliary functions.
\begin{coro}\label{cor:two_auxiliary}
Let $f$ be an unbounded analytic function in $\D$, and let $\psi_1,\psi_2$ be positive, continuous, nondecreasing functions on $[y_0,\infty)$ such that
\[
    \int_{y_0}^{\infty}\frac{\dd y}{\psi_j(y)}<\infty, \qquad \text{ for any } j \in \{1,2\}.
\]
Put $A(t)=\mu(f,t)/(1-t)$ and $\Omega_f(t)=\sum_{k\geq0}|a_k|t^k$.
With the absolute constant $C=32/3$, there is a measurable set $E\subset[0,1)$ with $\ell(E)<\infty$ such that, for $t\in(0,1)\setminus E$, we have
\begin{align}
    M(f,t)\leq\Omega_f(t) &\leq C A(t)\sqrt{\psi_2\bigl(\psi_1(\tfrac12\log\, (\Omega_f(t)))\bigr)} \label{eq:two_auxiliary_majorant} \\
    &\leq C A(t)\sqrt{\psi_2\bigl(\psi_1(\log\, (A(t)))\bigr)}. \label{eq:two_auxiliary}
\end{align}
The exceptional set may depend on $f,\psi_1,\psi_2$ and includes an initial interval so that both compositions are defined.
\end{coro}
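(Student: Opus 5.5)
The plan is to rerun the argument of Lemma~\ref{lem:variance_bound_before_main} and Theorem~\ref{thm:general_L}, with Proposition~\ref{prop:boundary_calculus} applied to a modified first auxiliary function. By the reduction in Section~\ref{sec:preliminaries}, we may assume $a_k\geq0$, so that $f=\Omega_f$ and $M(f,t)=f(t)$. Write $H(x)=F(-e^{-x})=\log\,(\Omega_f(t))$ with $t=e^{-e^{-x}}$.

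\textbf{Choice of the modified first auxiliary function.} Since $\psi_1$ is nondecreasing with $\int^\infty\dd y/\psi_1<\infty$, we have
\[
\frac{y}{2\psi_1(y)}\leq\int_{y/2}^{y}\frac{\dd u}{\psi_1(u)}\to0,
\]
so $\psi_1(y)/y\to\infty$. Hence, after increasing $y_0$, we get $\psi_1(y/2)\geq 2y$. Define
\[
\varphi_1(y)=\psi_1(y/2)-y .
\]
This function is continuous and satisfies $\varphi_1(y)\geq\tfrac12\psi_1(y/2)>0$. It is integrable in the required sense, because
\[
\int^\infty\frac{\dd y}{\varphi_1(y)}\leq 2\int^\infty\frac{\dd y}{\psi_1(y/2)}=4\int^\infty\frac{\dd u}{\psi_1(u)}<\infty .
\]
It is designed so that $H+\varphi_1(H)=\psi_1(H/2)$.

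\textbf{Variance bound and first inequality.} Apply Proposition~\ref{prop:boundary_calculus} with the pair $(\varphi_1,\psi_2)$; here $\psi_2$ is nondecreasing, as required. Outside a set $B_1$ of finite Lebesgue measure in $x$, this gives
\[
e^{-2x}\sigma_f^2(t)\leq\psi_2\bigl(\psi_1(\tfrac12 H(x))\bigr).
\]
Both $e^x$ and $G(x):=\psi_2(\psi_1(\tfrac12H(x)))$ tend to infinity, so after enlarging $x_0$ we have $e^x\sqrt{G}\geq1$. Then \eqref{eq:maxterm_variance} yields
\[
f(t)\leq\tfrac{16}{3}\bigl(1+e^x\sqrt G\bigr)\mu(f,t)\leq\tfrac{32}{3}\,e^x\sqrt G\,\mu(f,t).
\]
Using $e^x\leq 1/(1-t)$ from \eqref{eq:measure_transfer_density}, this becomes \eqref{eq:two_auxiliary_majorant} with $C=32/3$. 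The exceptional set is transferred to radii exactly as in \eqref{eq:measure_transfer}, with an initial interval added.

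\textbf{Second inequality (main obstacle).} We must pass from $\tfrac12\log\,(\Omega_f)$ to $\log\,(A)$, that is, show $h:=\log\,(\Omega_f(t))\leq 2a$ with $a:=\log\,(A(t))$. The estimate just obtained cannot do this by itself, since $\psi_2\circ\psi_1$ may grow arbitrarily fast. I would instead use a second, independent bound. Take the admissible function $L(y)=(\log y)^2$, which is admissible by Lemma~\ref{lem:iterated_admissible} with $n=2$, $\delta=1$, and use the inequality \eqref{eq:before_absorption} from the proof of Theorem~\ref{thm:general_L}, valid outside another set of finite logarithmic measure. Taking logarithms there gives
\[
h\leq\log C_1+a+\tfrac12\log h+2\log\log h .
\]
This forces $h\leq a+O(\log a)\leq 2a$ for $t$ close to $1$. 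On the union of the two exceptional sets, together with an initial interval, which still has finite logarithmic measure, the monotonicity of $\psi_1$ and $\psi_2$ gives
\[
\psi_2\bigl(\psi_1(h/2)\bigr)\leq\psi_2\bigl(\psi_1(a)\bigr),
\]
which proves \eqref{eq:two_auxiliary}.
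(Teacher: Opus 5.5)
Your proposal is correct and follows the paper's proof. You reduce to nonnegative coefficients, apply Proposition~\ref{prop:boundary_calculus} to a modified first auxiliary function built from $\psi_1(y/2)$ together with $\psi_2$, absorb the $1$ into $2e^x\sqrt{G}$ to get $C=32/3$, and obtain $h\le 2a$ from the $L(y)=(\log y)^2$ case of the general estimate. The differences are cosmetic: the paper uses $\widetilde\psi_1(y)=\psi_1(y/2)/2$, which gives $H+\widetilde\psi_1(H)\le\psi_1(H/2)$, where you use $\psi_1(y/2)-y$, which gives equality; and the paper quotes Corollary~\ref{cor:power} with $\varepsilon=1/2$, where you redo the absorption directly from \eqref{eq:before_absorption}.
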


\begin{proof}
By the reduction in Section~\ref{sec:preliminaries}, it suffices to consider $g=f_+$. Then $g(t)=\Omega_f(t)\to\infty$, as $t\uparrow1$, while $A(t)$ is unchanged and $M(f,t)\leq g(t)$. Corollary~\ref{cor:power}, applied to $g$ with $\varepsilon=1/2$, gives $g(t)\leq C_0A(t)\log\, (A(t))$ outside a set $E_0$ of finite logarithmic measure. With $h=\log\, (g(t))$ and $a=\log\, (A(t))$, taking logarithms yields $h\leq a+\log\, (a)+\log\, (C_0)\leq2a$, for $t\in(0,1)\setminus E_0$ sufficiently close to $1$, since $a\to\infty$.
\smallskip

Monotonicity and integrability give
\[
   0\leq\frac{y}{2\psi_1(y)}\leq\int_{y/2}^{y}\frac{\dd u}{\psi_1(u)}\longrightarrow0,	\quad\text{ as }y\rightarrow\infty.
\]
Thus $\psi_1(y)/y\to\infty$, as $y\rightarrow\infty$, which also implies $y\leq\tfrac12\psi_1(y/2)$ for all sufficiently large $y$. To obtain a bound involving $\psi_2(\psi_1(h/2))$ and then use $h/2\leq a$, we introduce the rescaled auxiliary function $\widetilde\psi_1(y)=\psi_1(y/2)/2$ on a sufficiently large interval $[Y,\infty)$, with $Y\geq2\max\{y_0,1\}$, and restrict $\psi_2$ to the same interval. The function $\widetilde\psi_1$ is positive, continuous, nondecreasing, and satisfies  
\[
	\int_Y^{\infty}\frac{\dd y}{\widetilde\psi_1(y)}
	=4\int_{Y/2}^{\infty}\frac{\dd u}{\psi_1(u)}<\infty.
\]
\smallskip

Moreover, this choice ensures that
\[
	y+\widetilde\psi_1(y)\leq\psi_1(y/2),\qquad \text{ for any } y\geq Y.
\]

Apply Proposition~\ref{prop:boundary_calculus} to $F(s)=\log\, (g(e^s))$ with $\widetilde\psi_1$ and $\psi_2$. Writing $t=e^{-e^{-x}}$, we have $F(-e^{-x})=h$. By monotonicity of $\psi_2$ and $h+\widetilde\psi_1(h)\leq\psi_1(h/2)$, Proposition \ref{prop:boundary_calculus} yields
\[
	\sigma_g(t)\leq e^x\sqrt{\psi_2\bigl(\psi_1(h/2)\bigr)}
\]
outside a set $E_1$ of finite logarithmic measure, by \eqref{eq:measure_transfer_density}. Since $e^x\sqrt{\psi_2(\psi_1(h/2))}\to\infty$, as $t\uparrow1$, we have
\[
1+\sigma_g(t)\leq 1+e^x\sqrt{\psi_2\bigl(\psi_1(h/2)\bigr)}
\leq 2e^x\sqrt{\psi_2\bigl(\psi_1(h/2)\bigr)}\,,
\]
for $t\in(0,1)\setminus E_1$ sufficiently close to $1$. Combining this inequality with \eqref{eq:maxterm_variance} and $e^x\leq(1-t)^{-1}$ from \eqref{eq:measure_transfer_density}, we obtain
\[
	g(t)\leq C A(t)\sqrt{\psi_2\bigl(\psi_1(h/2)\bigr)} \leq C A(t)\sqrt{\psi_2\bigl(\psi_1(a)\bigr)},
\]
with $C=32/3$. The second inequality uses $h/2\leq a$ and monotonicity. These estimates hold for $t\in(0,1)\setminus(E_0\cup E_1)$ sufficiently close to $1$. Adding the remaining initial interval produces a set $E$ of finite logarithmic measure. Finally, $M(f,t)\leq g(t)$ proves \eqref{eq:two_auxiliary_majorant}--\eqref{eq:two_auxiliary}.
\end{proof}

\begin{remark*}[Comparison with {\cite[Thm~2.1]{GE}}]
Taking $h(t)=(1-t)^{-1}$ in \cite[Thm~2.1]{GE} gives
\[
	M(f,t)\leq C\,\frac{\mu(f,t)}{\sqrt{1-t}}\sqrt{\psi_2\left(\frac{1}{1-t}\,\psi_1\bigl(\log\, (M(f,t))\bigr)\right)}
\]
outside a set of finite logarithmic measure. In \eqref{eq:two_auxiliary_majorant} and \eqref{eq:two_auxiliary}, the factor $(1-t)^{-1}$ multiplying $\psi_1$ is absent, and the square root is multiplied by $C A(t)=C\mu(f,t)/(1-t)$.
\smallskip

For the logarithmic estimates \cite[ineqs.~(10) and~(11)]{GE}, Theorem~\ref{thm:disk_wiman_valiron} removes the additional factor $\bigl(\log\, (1/(1-t))\bigr)^{1/2+\delta}$ while retaining an exceptional set of finite logarithmic measure. \hfill $\boxdot$
\end{remark*}

\section{The leading exponent and extensions}\label{sec:scope}

\subsection{The leading logarithmic exponent}

The following example differs only in its constant term from the case $\varepsilon=1/2$ of Sule\u\i manov's example~\cite{Suleimanov}, as stated in \cite[Remark~1.8]{GE}.
We give a direct proof of an asymptotic equality with leading constant $2\sqrt\pi$, refining the lower bound in \cite[inequality~(12)]{GE} for this function.
\medskip

\begin{propo}\label{prop:sharpness}
	Let
	\[
	f(z)=\sum_{k\geq0}e^{\sqrt{k}}z^k.
	\]
	This is an unbounded analytic function in $\D$. With $A(t)=\mu(f,t)/(1-t)$,
	\begin{equation}\label{eq:sharp_count}
		M(f,t)\sim2\sqrt\pi\,A(t)\sqrt{\log\, (A(t))},\qquad \text{ as } t\uparrow1.
	\end{equation}
	In particular, no universal version of \eqref{eq:disk_power} with logarithmic exponent $\beta<1/2$ can hold outside sets of finite logarithmic measure.
\end{propo}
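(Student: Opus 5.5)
Since the coefficients $e^{\sqrt k}$ are positive, $M(f,t)=f(t)$, and the whole proof reduces to computing three quantities as $t\uparrow1$: $f(t)$, $\mu(f,t)$, and $\log(A(t))$. Set $s=-\log t\downarrow0$, so that $1-t\sim s$. Because $e^{\sqrt k}t^k\ge e^{\sqrt k}t^k$ for all $k$ and $t^k\to1$, we get $f(t)\to\infty$; hence $f$ is unbounded. Its radius of convergence is $1$ because $e^{\sqrt k}$ grows subexponentially.

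\emph{Step 1: the maximum term.} Maximize $\phi(k)=\sqrt k-ks$ over real $k$. The maximum is at $k^*=1/(4s^2)$, with $\phi(k^*)=1/(4s)$. Restricting to integers changes $\phi$ by at most $O(\phi''\cdot1)=O(s^3)\to0$, because $\phi''(k)=-\tfrac14k^{-3/2}$ at $k\approx k^*$. Therefore
\[
\mu(f,t)=e^{1/(4s)}(1+o(1)),\qquad A(t)\sim \frac{e^{1/(4s)}}{s},\qquad \log(A(t))=\frac1{4s}+O(\log(1/s))\sim\frac1{4s}.
\]

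\emph{Step 2: Laplace asymptotics for $f(t)$.} Write $f(t)=\sum_k e^{\phi(k)}$. At $k^*$ we have $\phi''(k^*)=-\tfrac14 (4s^2)^{3/2}=-2s^3$, so the Gaussian width is $(2s^3)^{-1/2}$, which tends to infinity. The sum can therefore be compared with the integral, and
\[
f(t)\sim e^{1/(4s)}\sqrt{\frac{2\pi}{2s^3}}=\sqrt\pi\, s^{-3/2}e^{1/(4s)}.
\]
\emph{Step 3: compare.} Combining the previous steps,
\[
A(t)\sqrt{\log(A(t))}\sim \frac{e^{1/(4s)}}{s}\cdot\frac{1}{2\sqrt s}=\tfrac12 s^{-3/2}e^{1/(4s)}.
\]
The ratio $f(t)/(A\sqrt{\log A})$ then tends to $2\sqrt\pi$, which gives \eqref{eq:sharp_count}.

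\emph{Main obstacle.} The difficulty is making Step 2 rigorous. I would split the sum into two regions. The central window is $|k-k^*|\le s^{-3/2}\omega(s)$, with $\omega(s)\to\infty$ slowly. In this window, a Taylor expansion with cubic error $|\phi'''|\,|k-k^*|^3=O(s^5\cdot s^{-9/2}\omega^3)=o(1)$ holds for suitable $\omega$, and the Riemann sum of a Gaussian of width $s^{-3/2}\to\infty$ matches the integral. The tails are handled by concavity: $\phi(k)-\phi(k^*)\le -c\,s^3|k-k^*|\,\omega s^{-3/2}$ beyond the window, which makes them negligible geometric-type sums. Near $k=0$ there is the additional bound $\phi(k)\le\sqrt k$, which is far below $1/(4s)$.

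\emph{Final claim.} If an estimate $M\le C A(\log A)^{\beta}$ with $\beta<1/2$ held outside a set of finite logarithmic measure, that set would have to contain a whole interval $[r,1)$. Such an interval has infinite logarithmic measure, which is a contradiction.
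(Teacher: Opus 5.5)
Your proof is correct and follows the same three-step outline as the paper: locate the maximizer $k^*=1/(4s^2)$ of $\sqrt k-ks$ to get $\log\mu(f,e^{-s})=1/(4s)+O(s^3)$, show $f(e^{-s})\sim\sqrt\pi\,s^{-3/2}e^{1/(4s)}$, then compare. For Step~1 the paper completes the square, $\sqrt u-su=\frac1{4s}-s(\sqrt u-\frac1{2s})^2$, which gives the same conclusion as your Taylor argument.

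The real difference is in how Step~2 is made rigorous. You run a general Laplace method for sums:
\begin{itemize}
\item a central window of half-width $s^{-3/2}\omega(s)$, with cubic Taylor remainder $O(s^{1/2}\omega^3)$, so you need $\omega\to\infty$ with $\omega=o(s^{-1/6})$, e.g.\ $\omega=s^{-1/12}$;
\item a Riemann-sum comparison with the Gaussian of width $s^{-3/2}$;
\item chord bounds from concavity of $\phi$ for the tails.
\end{itemize}
All of these check out.

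The paper avoids this bookkeeping. Since $q_s(u)=e^{\sqrt u-su}$ is unimodal, the sum differs from $\int_0^\infty q_s$ by at most the total variation $2e^{1/(4s)}$. Because the exponent is an exact quadratic in $\sqrt u$, the substitution $w=\sqrt s(\sqrt u-1/(2s))$ evaluates the integral in closed form as $1/s+s^{-3/2}e^{1/(4s)}\int_{-1/(2\sqrt s)}^\infty e^{-w^2}\dd w$.

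What each route buys: the paper's is shorter and gives explicit error bounds valid for every $s>0$. Yours does not use the special algebraic form of $\sqrt k$. It would apply essentially verbatim to coefficients $e^{\phi(k)}$ with $\phi$ smooth and concave, provided $\phi''$ and $\phi'''$ scale comparably near the maximizer.

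One small slip: your unboundedness sentence states the tautology $e^{\sqrt k}t^k\geq e^{\sqrt k}t^k$. You presumably meant $e^{\sqrt k}t^k\geq t^k$, which gives $f(t)\geq(1-t)^{-1}$, as in the paper.
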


\begin{proof}
By the Cauchy--Hadamard formula, the radius of convergence of $f$ is $R = 1$, because $(e^{\sqrt{k}})^{1/k}\to1$, as $k\to\infty$.
The nonnegative coefficients also show that $M(f,t)=f(t)\geq(1-t)^{-1}$, so $f$ is unbounded.
Put $s=-\log\, (t)$ and $u_s=1/(4s^2)$, so that $t\in(0,1)$ corresponds to $s>0$, and $t\uparrow1$ corresponds to $s\downarrow0$. The inequalities below hold for every $s>0$, and the asymptotic relations are stated as $s\downarrow0$.
Completing the square gives
\begin{equation}\label{eq:complete_squares}
	\sqrt u-su=\frac1{4s} -s\left(\sqrt u-\frac1{2s}\right)^2,\qquad \text{ for any } u\geq0,
\end{equation}
so that $\mu(f,e^{-s}) = \max_{k \geq 0}\left(e^{\sqrt{k}-ks} \right) \leq e^{1/(4s)}$, for any $s>0$.
Choose an integer $k_s$ such that $|k_s-u_s|\leq1/2$. Such a choice is always possible, since for every real number there is an integer at most half a unit away. Using $\sqrt{u_s}=1/(2s)$, we obtain
\[
	\left|\sqrt{k_s}-\sqrt{u_s}\right| =\frac{|k_s-u_s|}{\sqrt{k_s}+\sqrt{u_s}}\leq\frac{1/2}{\sqrt{u_s}}= s,\qquad \text{ for any } s>0.
\]
The maximum term $\mu(f,e^{-s})$ is at least as large as the term with index $k_s$. Thus, by \eqref{eq:complete_squares},
\begin{align*}
	\log\, (\mu(f,e^{-s})) =\max_{k\geq0}(\sqrt{k}-sk) \geq\sqrt{k_s}-sk_s =\frac1{4s}-s\bigl(\sqrt{k_s}-\sqrt{u_s}\bigr)^2 \geq\frac1{4s}-s^3,
\end{align*}
where the last inequality follows from $|\sqrt{k_s}-\sqrt{u_s}|\leq s$. Combining this lower bound with the upper bound already obtained, we conclude that
\begin{equation*}
	\frac1{4s}-s^3\leq\log\, (\mu(f,e^{-s}))\leq\frac1{4s},\qquad \text{for any }s>0.
\end{equation*}
In particular,
\[
\left|\log\,(\mu(f,e^{-s}))-\frac1{4s}\right|\leq s^3, \quad \text{ as } s \downarrow 0\,,
\]
and therefore
\[
\log\,(\mu(f,e^{-s}))=\frac1{4s}+O(s^3)=\frac1{4s}+o(1) \quad\text{ as }s\downarrow0\,,
\]
which finally implies that 
\begin{align}\label{eq:maximum_term_asymptotic}
A(e^{-s}) \sim \frac{1}{s}e^{1/(4s)}\,, \quad \text{ as } s \downarrow 0\,.
\end{align}
\smallskip

To estimate the full sum, put $q_s(u)=e^{\sqrt u-su}$. For any $s>0$, this function increases up to $u_s$ and decreases thereafter, with maximum $e^{1/(4s)}$. Comparing the sum with the integral on each unit interval, as in \cite[Section~13.1, p.~318]{HardyDivergent}, and using this monotonicity on either side of $u_s$, we obtain
\[
    \left|f(e^{-s})-\int_0^\infty q_s(u)\dd u\right| \leq\int_0^\infty|q_s'(u)|\dd u \leq2e^{1/(4s)},\qquad \text{ for any } s>0.
\]
The substitution $w=\sqrt{s}(\sqrt u-1/(2s))$ yields, for any $s>0$,
\[
	\int_0^\infty q_s(u)\dd u =\frac1s+s^{-3/2}e^{1/(4s)} \int_{-1/(2\sqrt s)}^\infty e^{-w^2}\dd w.
\]
As $s\downarrow0$, the last integral tends to $\int_{-\infty}^\infty e^{-w^2}\dd w=\sqrt\pi$, and $1/s$ is negligible against $s^{-3/2}e^{1/(4s)}$, so
\begin{align*}
  f(e^{-s})\sim\sqrt\pi\,s^{-3/2}e^{1/(4s)},\qquad \text{ as } s\downarrow0.
\end{align*}
Combining \eqref{eq:maximum_term_asymptotic} with $(1-e^{-s})\sim s$, as $s\downarrow0$, we find that
\[
  \log\, (A(e^{-s}))=\frac1{4s}+O(\log\, (1/s))\sim\frac1{4s}, \quad \text{ as } s \downarrow 0.
\]
Since \eqref{eq:maximum_term_asymptotic} also gives $\mu(f,e^{-s})\sim e^{1/(4s)}$, as $s \downarrow 0$, we conclude \eqref{eq:sharp_count}.
\medskip
	
If $\beta<1/2$, then \eqref{eq:sharp_count} gives that $M(f,t)/\bigl(A(t)(\log\, (A(t)))^\beta\bigr)\to\infty$, as $t\uparrow1$.
Thus, for every fixed constant in a proposed upper bound with exponent $\beta$, that bound fails at every radius sufficiently close to $1$. Its exceptional set would have to contain an interval $(t_2,1)$, whose logarithmic measure is infinite.
\end{proof}

The factor $(1-t)^{-1}$ in our estimates is necessary, since the maximum term $\mu(f,t)$ and the logarithmic factor alone do not control the growth of $M(f,t)$: for $f(z)=1/(1-z)$ one has $\mu(f,t)=1$, so any bound of the form $M(f,t)\leq C\,\mu(f,t)(\log\,(A(t)))^\beta$ without it would force $M(f,t)$ to grow no faster than a power of $\log\,(1/(1-t))$, whereas $M(f,t)=(1-t)^{-1}\to\infty$.

\begin{remark}
Proposition~\ref{prop:sharpness} establishes sharpness of the leading term $A(t)\sqrt{\log\, (A(t))}$ as a whole: by \eqref{eq:sharp_count}, neither the prefactor $A(t)$ nor the exponent $1/2$ can be reduced.
It does not establish optimality of the iterated-logarithm factors in \eqref{eq:disk_iterated}, of the condition $\delta>0$, or of the size of the exceptional set.
In fact, \eqref{eq:sharp_count} shows that this example satisfies 
$$M(f,t)\leq C A(t)\sqrt{\log\, (A(t))}\,,$$
for every $t$ sufficiently close to $1$, without additional logarithmic factors.
This stronger endpoint corresponds to $L\equiv1$, whereas setting $\delta=0$ in \eqref{eq:disk_iterated} retains the iterated-logarithm factors.
Neither choice satisfies \eqref{eq:L_integral}. This limits the method without deciding whether either endpoint holds in general.
For entire functions, Hayman shows that $\delta$ cannot be set equal to zero in his inequality~(4.14), see \cite[pp.~333--334]{Hayman}. \hfill $\boxdot$
\end{remark}

\subsection{Other radii and bounded functions}

The restriction to the unit disk is only a normalization.
If $f$ is unbounded and analytic on $|z|<R$, where $0<R<\infty$, apply Theorem~\ref{thm:general_L} to $g(z)=f(Rz)$.
Then the same conclusion holds with
\[
    A_R(t)=\frac{\mu(f,t)}{1-t/R},\quad  \text{ for any } 0<t<R \qquad \text{ and } \qquad \int_E\frac{\dd t}{R-t}<\infty.
\]
Indeed $M(g,r)=M(f,Rr)$ and $\mu(g,r)=\mu(f,Rr)$, and the substitution $t=Rr$ preserves the displayed measure exactly.
\medskip

The upper bounds for $M(f,t)$ expressed solely in terms of $A(t)$ also hold for bounded nonzero analytic functions. Indeed, if $f$ is bounded and not identically zero, then $M(f,t)\leq C_f$ whereas $A(t) = \mu(f,t)/(1-t) \to\infty$, as $t \uparrow 1$, again, because $\mu(f,t)\geq|a_k|t^k$ for one nonzero coefficient.
Thus the same conclusions hold trivially after discarding an initial interval.
\medskip

\noindent\textbf{Acknowledgements.}
The author would like to thank Professor Jos\'e L. Fern\'andez for his helpful comments and suggestions.
\medskip

\noindent\textbf{Funding.}
V.~J.\ Maci\'a was partially supported by the Spanish Government through grants PID2023-148028NB-I00, PID2024-160326NA-I00 and PID2025-167726NB-I00.

\end{document}